
\documentclass[11pt]{amsart}

\usepackage{amsmath, amssymb, bm, color}  
\usepackage{amsthm} 


\newtheorem*{Theorem}{Theorem}

\newtheorem*{Corollary}{Corollary}

\theoremstyle{definition}

\newtheorem*{Definition}{Definition}

\theoremstyle{remark}

\newtheorem{Remark}{Remark}
\newtheorem*{Example}{Example}

\newcommand{\real}{\mathbb{R}}

\newcommand{\integer}{\mathbb{Z}}

\newcommand{\OP}{\mathrm{Op}}
\newcommand{\cW}{\mathcal{W}}

\begin{document}

\title{Virtually expanding dynamics}
\author[M.~Tsujii]{Masato TSUJII}
\address{Department of Mathematics, Kyushu University, Fukuoka, 819-0395}
\email{tsujii@math.kyushu-u.ac.jp}
\keywords{expanding map, Perron-Frobenius operator, quasi-compactness}
\thanks{This work was supported by JSPS Grant-in-Aid for Scientific Research (B) 15H03627. The author thanks Hesam Rajabzadeh (IPM, Iran),  Roberto Castorrini (GSSI, Italy) and the anonymous referee for pointing out a few errors in the previous versions of this paper. }
\date{\today}

\begin{abstract}
We introduce a class of discrete dynamical systems that we call \emph{virtually expanding}. This is an open subset of self-covering maps on a closed manifold which contains all expanding maps and some partially hyperbolic volume-expanding maps.  We show that the Perron-Frobenius operator is quasi-compact on a Sobolev space of positive order for such class of dynamical systems.
\end{abstract}

\maketitle

\section{Introduction}
Expanding maps are a class of discrete dynamical systems that exhibit typically chaotic behavior of orbits and studied extensively as a standard model of chaotic dynamical systems. 
Among others, an important fact about expanding dynamical system is that the associated Perron-Frobenius operators are quasi-compact on the space of functions with some smoothness. (See \cite{Ruelle76, MR1793194}.) 
Based on the recent understanding on expanding and hyperbolic dynamical systems in terms of Fourier analysis \cite{BaladiTsujii07, GouezelLiverani06, FRS, 1706.09307}, this fact can be understood as a consequence of  the property of the Perron-Frobenius operators that they reduce high-frequency components of functions by transferring them to low-frequency ones. But actually in order to apply this argument and obtain the conclusion of quasi-compactness, we only need to have that property ``in average with respect to preimages" and ``direction-wise".  Indeed we find that the argument about quasi-compactness of the Perron-Frobenius operators for expanding maps can be extended to a much more general class of dynamical systems, which we would like to call \emph{virtually expanding maps}. 
Below we present the definition and the main property of virtually expanding maps. In the last section, we discuss briefly about a further generalization of the class of virtually expanding maps.

\section{Main result}
Let $M$ be a closed connected $C^\infty$ manifold and let $f:M\to M$ be a $C^{\infty}$ covering map. We suppose that $M$ is equipped with some Riemann metric and that the Jacobian determinant $Jf(p)$ does not vanish for any $p\in M$. 
Let $T^{*}M$ be the cotangent bundle of $M$. The push-forward action of $f$ on $T^{*}M$ is written as
\[
f^{\dag}:T^{*}M\to T^{*}M, \quad
(q,\xi) \mapsto f^{\dag}(q,\xi)=(f(q),((D_{q}f)^{*})^{-1}(\xi)).
\]
For $\mu\ge 0$ and each non-zero covector $(q,\xi)\in T^{*}M$, we set
\begin{equation}\label{eq:b}
b^{\mu}((q,\xi);f)=\sum_{f^{\dag}(p,\eta)=(q,\xi)} \frac{ (|\xi|/|\eta|)^{\mu}}{|Jf(p)|}
\end{equation}
where the sum on the right-hand side denotes that over the points $(p,\eta)\in T^*M$ satisfying $f^{\dag}(p,\eta)=(q,\xi)$ and we write $|\cdot|$ for the induced norm on the cotangent bundle from the Riemann metric.
Then we define
\begin{equation}\label{eq:B}
B^{\mu}(f)=\sup\{\, b^{\mu}((q,\xi);f)\mid 0\neq (q,\xi) \in T^{*}M\}.
\end{equation}
Note that, for another $C^{\infty}$ covering map $g:M\to M$ with non-vanishing Jacobian determinant, we have
\[
B^{\mu}(f\circ g)\le B^{\mu}(f)\cdot B^{\mu}(g).
\]
In particular $B^\mu(f^n)$ is sub-multiplicative with respect to $n$:
\begin{equation}\label{eq:submulti}
B^{\mu}(f^{n+m})\le B^{\mu}(f^{n})\cdot B^{\mu}(f^{m})\quad \quad \mbox{for }n,m\ge 1.
\end{equation}
\newcommand{\qe}{\mathcal{VE}}
\begin{Definition}
For $\mu>0$, a $C^{\infty}$ covering map $f:M\to M$ is $\mu$-virtually expanding if 
\[
\inf_{n\ge 1}(B^{2\mu}(f^{n}))^{1/n}=\lim_{n\to \infty} (B^{2\mu}(f^{n}))^{1/n}<1,
\]
where the equality holds by Fekete subadditive lemma. 
We write $\qe^{\mu}(M)$ for the set of $\mu$-virtually expanding maps on $M$. 
We call an element of  $\qe(M)=\cup_{\mu>0} \qe^\mu(M)$ virtually expanding. 
$\qe^{\mu}(M)$ for $\mu>0$ as well as $\qe(M)$ are $C^{1}$ open subsets of $C^{\infty}(M,M)$. 
\end{Definition}

\begin{Remark}
The definition of virtually expanding property above does not depend on the choice of the Riemann metric on $M$, and we may even consider  any continuous Finsler metric in the place of the Riemann metric. 
\end{Remark}

An expanding map $f:M\to M$ is virtually expanding. Indeed by a standard distortion estimate we have that 
\[
\lim_{n\to \infty} (B^{0}(f^n))^{1/n}\le 1
\]
and hence, for any $\mu>0$, 
\[
\lim_{n\to \infty} (B^{2\mu}(f^n))^{1/n}\le \lambda^{-2\mu}<1
\]
where $\lambda>1$ is the minimum expansion rate of $f$. 

But the converse is not true except for one dimensional cases because,  in the definition \eqref{eq:b}, we consider \emph{each} (direction of) non-zero covector $(q,\xi)\in T^*M$  and take the \emph{averaged value} of the expansion rates over the backward images of $(q,\xi)$ with weight $1/|Jf|$.
There are in fact many virtually expanding maps that are not expanding.
Let us say that a  $C^\infty$ map $f:M\to M$ is volume-expanding if
\[
\lim_{n\to \infty} 
\left(\inf_{p\in M} |Jf^n(p)|\right)^{1/n}>1.
\]
A virtually expanding map is necessarily volume-expanding.
The example below shows that  the set  $\qe(\mathbb{T}^2)$ of virtually expanding maps on the $2$-dimensional torus $\mathbb{T}^2=\real^2/\integer^2$ contains a non-empty $C^1$ open subset of volume-expanding maps, which are not expanding. 
\begin{Example} 
Consider a map $f:\mathbb{T}^2\to \mathbb{T}^2$ on the torus $\mathbb{T}^2$ defined by 
\begin{equation}\label{example}
f(x,y)=(mx,y+m\cos 2\pi x)\quad \mod\integer^2
\end{equation}
with $m$ an integer. 
For this map and its small $C^1$ perturbations, we can check the condition $B^{2\mu}(f)<1$ at least if we assume $m$ to be sufficiently large depending on $\mu>0$. A proof will be provided in the appendix. 
\end{Example}
\begin{Remark}
The argument in the proof of the claim $B^\mu(f)<1$ is closely related to that for partially hyperbolic endomorphisms in \cite{MR2191383,  MR3521633, ZhangZ} using some transversality condition on the images of  unstable cones. 
It will be possible to prove that a generic $C^\infty$ partially hyperbolic volume-expanding maps are virtually expanding. Further it should not be too optimistic to expect that the same is true without the assumption of partial hyperbolicity. 
\end{Remark}
The Perron-Frobenius operator $P$ expresses the push-forward action of $f$ on densities on $M$ and is defined explicitly  by 
\[
P u(q)=\sum_{p\in f^{-1}(q)}\frac{u(p)}{|Jf(p)|}.
\]
It is not difficult to see that $P$ gives a bounded operator on the Sobolev space $H^{\mu}(M)$ of any order $\mu\in \real$. (See \cite[\S 1.3, \S 1.5]{TaylorText81} for the definition of $H^{\mu}(M)$.)
Our main theorem  is the following. 
\begin{Theorem}\label{th:main}
If $f$ belongs to $\qe^{\mu}(M)$ for some $\mu>0$, the Perron-Frobenius operator $P$ is quasi-compact on the Sobolev space $H^{\mu}(M)$ of order $\mu>0$, that is, the essential spectral radius of $P:H^{\mu}(M)\to H^{\mu}(M)$ is smaller than its spectral radius $1$. 
\end{Theorem}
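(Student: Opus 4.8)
The plan is to establish a Lasota--Yorke (Doeblin--Fortet) inequality for the iterates of $P$ and then to invoke Hennion's theorem. First I would record that the spectral radius of $P$ on $H^{\mu}(M)$ equals $1$: since $M$ is compact we have $H^{\mu}(M)\hookrightarrow L^{1}(M)$ and $\|Pu\|_{L^{1}}\le\|u\|_{L^{1}}$ (because $P$ preserves positivity and $\int_{M}Pu=\int_{M}u$), so $P$ has no eigenvalue of modulus $>1$ on $H^{\mu}(M)$ (if $Pu=\lambda u$ with $|\lambda|>1$ then $\|u\|_{L^{1}}=|\lambda|^{-n}\|P^{n}u\|_{L^{1}}\le|\lambda|^{-n}\|u\|_{L^{1}}\to 0$), while the transpose of $P$ is the composition operator $u\mapsto u\circ f$ on $H^{-\mu}(M)$, which fixes the constant function; hence $1\in\mathrm{spec}(P)$. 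It therefore suffices to bound the essential spectral radius, and by Hennion's theorem (see e.g.\ \cite{MR1793194}) it is enough to produce, for every $n\ge 1$, an inequality
\[
\|P^{n}u\|_{H^{\mu}}\le C_{0}\,\bigl(B^{2\mu}(f^{n})\bigr)^{1/2}\|u\|_{H^{\mu}}+C_{n}\,\|u\|_{H^{\mu'}},\qquad u\in H^{\mu}(M),
\]
with a fixed $\mu'<\mu$ and a constant $C_{0}$ \emph{independent of $n$} (the $C_{n}$ may be arbitrary), since then, using compactness of $H^{\mu'}\hookrightarrow H^{\mu}$ on the closed manifold $M$ (Rellich) and $\lim_{n}(C_{0})^{1/n}=1$, Hennion's theorem gives $r_{\mathrm{ess}}(P)\le\bigl(\lim_{n\to\infty}(B^{2\mu}(f^{n}))^{1/n}\bigr)^{1/2}<1$ by the definition of $\qe^{\mu}(M)$.

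To obtain this inequality I would transport the problem to the cotangent bundle via a wave-packet (FBI/Bargmann-type) transform $\mathcal{T}\colon L^{2}(M)\to L^{2}(T^{*}M)$, built from a finite atlas and from wave packets of spatial width $\sim|\xi|^{-1/2}$ at frequency $\xi$ (so that a parabolic decomposition of the cotangent fibres is built in), as in the microlocal treatments \cite{FRS,1706.09307}. This $\mathcal{T}$ is a partial isometry, it represents the $H^{\mu}$-norm as
\[
\|u\|_{H^{\mu}}^{2}=\int_{T^{*}M}\langle\xi\rangle^{2\mu}\,\bigl|\mathcal{T}u(q,\xi)\bigr|^{2}\,dq\,d\xi+O\bigl(\|u\|_{H^{\mu'}}^{2}\bigr),
\]
and, because $P=f_{*}$ is a Fourier integral operator whose canonical relation is the graph of the cotangent lift $f^{\dag}$, it conjugates $P$ into a weighted transfer operator: $\mathcal{T}P\mathcal{T}^{*}=\mathcal{L}+R$, where $R$ is smoothing and
\[
(\mathcal{L}V)(q,\xi)=\sum_{f^{\dag}(p,\eta)=(q,\xi)}|Jf(p)|^{-1/2}\,V(p,\eta).
\]
Equivalently, $P$ carries the $L^{2}$-normalized wave packet at $(p,\eta)$ to $|Jf(p)|^{-1/2}$ times that at $f^{\dag}(p,\eta)$, up to lower-order errors. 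Two geometric facts make this efficient: $f^{\dag}$ preserves the canonical symplectic volume $dq\,d\xi$ (cotangent lifts of local diffeomorphisms are symplectic), and each $(q,\xi)$ has exactly $\#f^{-1}(q)<\infty$ preimages under $f^{\dag}$.

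The inequality then drops out of one application of Cauchy--Schwarz. Up to the above errors,
\[
\langle\xi\rangle^{\mu}\,\mathcal{T}(Pu)(q,\xi)\approx\sum_{f^{\dag}(p,\eta)=(q,\xi)}\frac{(\langle\xi\rangle/\langle\eta\rangle)^{\mu}}{|Jf(p)|^{1/2}}\cdot\langle\eta\rangle^{\mu}\,\mathcal{T}u(p,\eta),
\]
and Cauchy--Schwarz over the finite preimage set bounds $\bigl|\langle\xi\rangle^{\mu}\mathcal{T}(Pu)(q,\xi)\bigr|^{2}$ by
\[
\Bigl(\sum_{f^{\dag}(p,\eta)=(q,\xi)}\frac{(\langle\xi\rangle/\langle\eta\rangle)^{2\mu}}{|Jf(p)|}\Bigr)\cdot\Bigl(\sum_{f^{\dag}(p,\eta)=(q,\xi)}\langle\eta\rangle^{2\mu}\,|\mathcal{T}u(p,\eta)|^{2}\Bigr).
\]
The first factor is uniformly bounded (as $|\eta|\ge c|\xi|$ and $|Jf|\ge c>0$) and converges to $b^{2\mu}((q,\xi);f)\le B^{2\mu}(f)$ as $|\xi|\to\infty$; integrating the second factor over $T^{*}M$ and using the volume-preservation of $f^{\dag}$ gives back $\int_{T^{*}M}\langle\eta\rangle^{2\mu}|\mathcal{T}u|^{2}=\|u\|_{H^{\mu}}^{2}+O(\|u\|_{H^{\mu'}}^{2})$. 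Hence $\|Pu\|_{H^{\mu}}\le(1+o(1))B^{2\mu}(f)^{1/2}\|u\|_{H^{\mu}}+(\text{lower order})$, and running the identical computation with $f^{n}$ in place of $f$ produces the leading term $C_{0}(B^{2\mu}(f^{n}))^{1/2}\|u\|_{H^{\mu}}$ with $C_{0}$ depending only on $\dim M$ and on $\mathcal{T}$.

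The real work, and the step I expect to be the main obstacle, is to make ``up to errors'' quantitative and to show that everything beyond this leading term is absorbed into $C_{n}\|u\|_{H^{\mu'}}$. The delicate point is that the replacement of $P^{n}$ on a wave packet by $|Jf^{n}(p)|^{-1/2}$ times a single wave packet at $f^{n\dag}(p,\eta)$ is legitimate only once the nonlinearity of $f^{n}$ is negligible on the spatial scale $\sim|\xi|^{-1/2}$ of the packet, i.e.\ once $|\xi|$ exceeds a threshold that grows with $\|Df^{n}\|$ and $\|D^{2}f^{n}\|$; controlling the kernel of $P^{n}$ in the wave-packet representation above that threshold is the standard (non-)stationary phase estimate for Fourier integral operators, and it is exactly what forces the parabolic $|\xi|^{-1/2}$ aperture. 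Below the threshold one writes $P^{n}=P^{n}\Pi^{(n)}_{\mathrm{hi}}+P^{n}\Pi^{(n)}_{\mathrm{lo}}$ with a smooth frequency cutoff; since $\Pi^{(n)}_{\mathrm{lo}}$ is smoothing, $P^{n}\Pi^{(n)}_{\mathrm{lo}}$ is bounded from $H^{\mu'}$ into $H^{\mu}$ and contributes the term $C_{n}\|u\|_{H^{\mu'}}$. The remaining discrepancies — between $\|\cdot\|_{H^{\mu}}$ and the weighted $L^{2}$ norm of $\mathcal{T}(\cdot)$, the rapidly decaying off-diagonal overlaps of wave packets in distinct cones (almost-orthogonality), and the slow variation of $|Jf^{n}|$ over the support of a packet — are all of order $<\mu$ and likewise go into $C_{n}\|u\|_{H^{\mu'}}$. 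Assembling these estimates yields the Lasota--Yorke inequality, and hence the theorem. A technically lighter variant dispenses with the FBI transform and runs the same bookkeeping with a Littlewood--Paley-type cone decomposition together with a spatial partition of unity on $M$ and the Cotlar--Stein lemma, the diagonal blocks reproducing the factors $|Jf|^{-1/2}$ and the off-diagonal blocks being $O(|\xi|^{-\infty})$, so that the sum over cells is again governed by $b^{2\mu}$.
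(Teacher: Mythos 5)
Your proposal is sound in outline and aims at the same endpoint as the paper --- a Lasota--Yorke inequality whose leading coefficient is essentially $\sqrt{B^{2\mu}(f^n)}$, combined with Hennion's theorem and the compact embedding $H^{\mu}\hookrightarrow H^{\mu'}$ --- but the machinery you use to produce that inequality is genuinely different. The paper stays entirely within the standard pseudo-differential calculus: it localizes to a chart where $f^{-1}(U)=\bigsqcup_j V_j$, writes $P=\sum_j P_j$, factors $\OP(\cW^{\mu})P_j$ through two $\Psi$DOs $A_j$ and $B_j$ whose symbols carry $\cW^{\mu}$ and the pull-back $Df_j^{*}$, and then applies an operator-theoretic incarnation of the Lagrange identity (the error-term form of Cauchy--Schwarz) together with G\r{a}rding's inequality to the nonnegative symbol
\[
B^{2\mu}(f)+\varepsilon + C\cW^{2\kappa} - \sum_{j} \frac{\cW^{2\mu}}{(Jf\circ f_j^{-1})\,\cW^{2\mu}\circ Df_j^{*}}
\]
to lock in the coefficient $\sqrt{B^{2\mu}(f)+\varepsilon}$; all remainders, via the product, adjoint, and coordinate-change formulas, fall to order $\mu-\tfrac12$ or lower. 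You instead transport the problem to $T^{*}M$ via an FBI/wave-packet transform, conjugate $P$ to an approximate weighted transfer operator over the lift $f^{\dag}$, and run a pointwise Cauchy--Schwarz over the finite preimage set, using symplectic volume preservation of $f^{\dag}$ for the change of variables. The core arithmetic --- Cauchy--Schwarz over preimages producing exactly the sum defining $b^{2\mu}$ --- is the same in both; the difference is that the paper implements it with $\Psi$DO symbol algebra plus G\r{a}rding (self-contained, one step of $f$ at a time, no need to track kernel spreading of a FIO representation of iterates), whereas your route makes the cotangent-bundle geometry and the role of $f^{\dag}$ fully transparent, at the price of the nontrivial technical work you correctly flag: concentration of the FBI kernel of $P^{n}$ near the graph of $f^{n\dag}$ above an $n$-dependent frequency threshold, and uniform control of the off-diagonal, almost-orthogonality, and symbol-variation errors below order $\mu$. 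Both paths are credible; the paper's is somewhat lighter because the only FIO-theoretic input used is the coordinate-change formula for $\Psi$DOs.
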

\begin{Remark}
Once we show that the essential spectral radius of $P:H^\mu(M)\to H^\mu(M)$ is smaller than $1$, it is easy to see that the spectral radius equals $1$ as the spectral radius is just the maximum of the moduli of  the  peripheral discrete eigenvalues.
\end{Remark}
\begin{Remark}
In the conclusion of Theorem, 
there may be several eigenvalues of modulus $1$ and the multiplicity of the eigenvalue $1$ may be greater than $1$. It will be possible to observe various (complicated but relatively tame) bifurcations of those eigenvalues when we perturb the maps in the class $\qe^{\mu}(M)$. 
\end{Remark}
From the conclusion of the theorem, a few important dynamical properties of the map $f$ follows immediately. 
For instance we have
\begin{Corollary} 
If $f$ belongs to $\qe^{\mu}(M)$ for some $\mu>0$, there are finitely many absolutely continuous ergodic measures whose densities with respect to the smooth measure are in $H^\mu(M)$ and almost every point on $M$ with respect to the smooth measure is generic for either of them. 
\end{Corollary}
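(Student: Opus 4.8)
The plan is to feed the conclusion of Theorem~\ref{th:main} into the classical ``quasi-compactness implies statistical structure'' machinery. The starting observations are elementary: from the change-of-variables identity $\int(\varphi\circ f)\,u\,dm=\int\varphi\,Pu\,dm$ one gets $\int Pu\,dm=\int u\,dm$ and $\int|Pu|\,dm\le\int P|u|\,dm=\int|u|\,dm$, so $P$ is an $L^1(m)$-contraction and the transpose of $P$ on $H^{-\mu}(M)=(H^\mu(M))'$ fixes the constant function $1$; hence $1\in\mathrm{spec}(P|_{H^\mu})$, and by Theorem~\ref{th:main} it is an isolated eigenvalue of finite multiplicity. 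Since $H^\mu\hookrightarrow L^1(m)$ continuously and $\|P^n\|_{L^1\to L^1}\le1$, a nontrivial Jordan block at a unit-modulus eigenvalue would force $\|P^nu\|_{L^1}$ to grow linearly, a contradiction; so the peripheral spectrum is semisimple and the Cesàro means $n^{-1}\sum_{k=0}^{n-1}P^k$ converge in $\mathcal L(H^\mu)$ to the spectral projection $\Pi$ onto $V:=\ker(P-1)$. Thus $\Pi$ is a \emph{positive} operator preserving $\int(\cdot)\,dm$, and $N:=\dim V<\infty$.

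Next I would analyze $V$ as a finite-dimensional space of $m$-densities of signed $f$-invariant measures, using the standard Perron--Frobenius / Ionescu-Tulcea--Marinescu--Hofbauer--Keller structure theory for quasi-compact positive operators; see \cite{MR1793194}. This yields a basis $h_1,\dots,h_N\ge0$ of $V$ with pairwise $m$-disjoint supports $X_i:=\{h_i>0\}$ such that each $\mu_i:=(\int h_i\,dm)^{-1}h_i\,dm$ is ergodic. Any absolutely continuous $f$-invariant probability $\nu=g\,dm$ with $g\in H^\mu$ satisfies $Pg=g$, i.e.\ $g\in V$; writing $g=\sum c_ih_i$, disjointness of supports forces $c_i\ge0$ and ergodicity of $\nu$ forces all but one $c_i$ to vanish. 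Hence $\mu_1,\dots,\mu_N$ are exactly the absolutely continuous ergodic invariant probabilities with density in $H^\mu$, and there are finitely many. The point that genuinely needs care in the Sobolev (rather than Banach-lattice) setting is that the ``truncated'' densities occurring here --- $|h|$ for real $h\in V$, and the restrictions $h\mathbf{1}_{X_i}$ --- still lie in $H^\mu$; this is handled as in \cite{MR1793194}, combining the $L^1$-identities $P|h|=|h|$ and $P(h\mathbf{1}_{X_i})=h\mathbf{1}_{X_i}$ with the Lasota--Yorke inequality underlying Theorem~\ref{th:main}. This regularity issue is the step I expect to be the main obstacle; the rest is spectral and ergodic bookkeeping.

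For the basin statement I would set $h_0:=\Pi1=\sum_ic_ih_i$, so $c_i\ge0$, $\int h_0\,dm=m(M)>0$, and (as $h_i=0$ off $X_i$) $\{h_0>0\}=\bigcup_{c_i>0}X_i$. The set $D:=\{h_0=0\}$ satisfies $f^{-1}(D)\subseteq D$ because $Ph_0=h_0\ge0$, so $k\mapsto m(f^{-k}D)$ is non-increasing; moreover $n^{-1}\sum_{k<n}m(f^{-k}D)=\int_D\bigl(n^{-1}\sum_{k<n}P^k1\bigr)\,dm\to\int_Dh_0\,dm=0$, whence $m\bigl(\bigcap_nf^{-n}D\bigr)=0$. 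Therefore $m$-a.e.\ $x$ has $f^{n}(x)\in\{h_0>0\}$ for some $n\ge0$, say $f^n(x)\in X_i$ with $c_i>0$. On each $X_i$ one has $m|_{X_i}\sim\mu_i$, so Birkhoff's theorem for the ergodic measure $\mu_i$ gives that $m$-a.e.\ point of $X_i$ is generic for $\mu_i$; and since $f$ is non-singular for $m$ (non-vanishing Jacobian), the pullbacks $f^{-n}$ preserve $m$-null sets, so for $m$-a.e.\ $x$ the point $f^n(x)$, whenever it lies in $X_i$, is in fact generic for $\mu_i$. Genericity being unaffected by deleting a finite orbit segment, it follows that $m$-a.e.\ $x\in M$ is generic for one of $\mu_1,\dots,\mu_N$, which is the assertion of the corollary.
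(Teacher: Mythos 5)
Your proof is correct but takes a genuinely different and more thorough route than the paper's. The paper proves only the generic-point assertion, explicitly deferring the finiteness claim to "standard" arguments, and does so by contradiction: assuming the non-generic set $X$ has positive $m$-measure, it approximates $\mathbf{1}_X$ in $L^2$ by a smooth $\chi\ge 0$, notes that quasi-compactness forces $\chi_m := m^{-1}\sum_{k<m}P^k\chi$ to converge in $H^\mu$ (hence in $L^2$) to an element of the $1$-eigenspace $E$, while $f^{-1}(X)=X$ gives $\int_X\chi_m\,dm=\int_X\chi\,dm$ bounded away from $0$; since the $\varphi_i$, which span $E$, vanish a.e.\ on $X$, one has $\mathbf{1}_X\perp E$, so $\chi_m$ retains an $L^2$-component orthogonal to $E$ of uniformly positive size, a contradiction. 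You instead build the positive-operator decomposition of $V=\ker(P-1)$ directly: peripheral semisimplicity from the $L^1$-contraction bound (your Jordan-block argument is sound once one notes $H^\mu\hookrightarrow L^1$ is injective, or uses norm equivalence on the finite-dimensional generalized eigenspace), the positive mass-preserving projector $\Pi$, a disjointly-supported basis $h_i$, and then a basin argument pulling $\{\Pi 1=0\}$ back under $f^{-n}$ and invoking Birkhoff on each ergodic piece. Your route proves both claims of the Corollary at once, avoids the contradiction structure, and does not implicitly presuppose that $E$ is spanned by the $\varphi_i$ (which the paper's argument needs). The place you flag as the main obstacle --- that $|h|$ and $h\mathbf{1}_{X_i}$ remain in $H^\mu$ --- is indeed the one nontrivial regularity step, and it is precisely what the paper waves away as "standard"; it is resolved, as you sketch, by combining the $L^1$ fixed-point identities with convergence of the Cesàro means and the equivalence of $\|\cdot\|_{L^1}$ and $\|\cdot\|_{H^\mu}$ on the finite-dimensional space $V$. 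The paper's contradiction argument is shorter, but only because it takes the structure of $E$ as given.
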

\begin{proof} We only prove the latter claim. (The proof of the former claim should be rather standard if we note that $P$ preserves positive and negative part of functions.)
We write $m$ for the Riemann volume on $M$.  
Let $\mu_i=\varphi_i dm$, $1\le i\le \ell$,  be the absolutely continuous ergodic measures whose densities $\varphi_i$ belong to $H^\mu(M)$. Suppose that the measurable subset 
\[
X=\{x\in M\mid \text{$x$ is not generic for either of $\mu_i$ for $1\le i\le \ell$}\}
\]
has positive measure with respect to $m$.
We approximate the characteristic function of $X$ by a $C^\infty$ positive-valued function $\chi$ in $L^2$ sense. From the quasi-compactness of $P$, the sequence of functions
\[
\chi_m:=\frac{1}{m}\sum_{k=0}^{m-1} P^k \chi
\] 
converges, as $m\to \infty$, to a function in the eigenspace $E$ for the eigenvalue $1$ in $H^\mu(M)$ and hence in $L^2(M)$. 
However, since $f(X)= X$ and since $P$ preserves the $L^1$ norms of positive-valued functions, the  $L^2$-norm of the component of $\chi_m$ that is $L^2$-orthogonal to the subspace $E$ is bounded from below by a positive constant, provided that $\chi$ is sufficiently close to the characteristic function of $X$ in $L^2$ sense. This is  a contradiction and therefore $X$ is a null subset.  
\end{proof}

\section{Proof of Theorem}
We will use a few basic theorems on pseudo-differential operators and notations given in \cite[Ch.1 and 2]{TaylorText81}. (In particular, see the remark at the end of \cite[\S 2.5]{TaylorText81} for the concept of a pseudo-differential operator on a manifold $M$.) 
First we take a small open connected subset $U\subset M$ so that its backward image $f^{-1}(U)$ consists of connected open subsets $V_1$, $V_2$, \dots, $V_{\deg f}$, on each of which the map $f$ is a diffeomorphism onto $U$. Here $\deg f$ denotes the mapping degree of $f$. We write $f_j:V_j\to U$ for the restriction of $f$ to $V_j$. For convenience, we suppose that each of $U$ and $V_j$, $1\le j\le \deg f$, is contained in a single coordinate chart. 

Suppose that a function $u\in C^\infty(M)$ is supported on $f^{-1}(U)=\cup_{j=1}^{\deg f}V_j$ and we write $u_j$ for the restriction of $u$ to $V_j$. Then we may write 
\[
Pu =\sum_{j=1}^{\deg f} P_j u_j, \quad P_j u_j= \frac{u_j}{Jf}\circ f_j^{-1}
\]
where $P_j$ denotes the restriction of $P$ to the functions supported on $V_j$. Note that $P_j$ viewed in the local coordinate chart is  a multiplication by a smooth function composed with an operation of change of variables. 

For $\mu\ge 0$, we define a function 
\[
\mathcal{W}^\mu:T^*M \to \real,\quad 
\mathcal{W}^\mu(x,\xi)=\langle |\xi|\rangle^{\mu} 
\]
with setting $\langle s\rangle :=\sqrt{1+s^2}$. 
This function belong to a standard symbol class $S^{\mu}_{1,0}$ (see \cite[\S  2.1]{TaylorText81}) and we may define the norm $\|\cdot\|_{H^\mu}$ on the Sobolev space $H^\mu(M)$ by 
\[
\|u\|_{H^\mu}^2=\|\OP(\mathcal{W}^\mu) u\|_{L^2}^2+ \|u\|_{L^2}^2
\]
where (and henceforth) we write\footnote{In the reference \cite{TaylorText81}, pseudo-differential operators are written as $F(x,D)$. Here  we use a different notation for convenience.}  $\OP(F)$ for the pseudo-differential operator with symbol $F$. (See \cite[\S 2.1, \S 2.5]{TaylorText81}.) For convenience we will understand $H^\mu(M)=L^2(M)$ when $\mu<0$.

We write $
Df^*_j:T^*U\to T^*V_j$
for the pull-back action of $f_j$ on the cotangent bundle restricted $T^*U$.  This is a bijection. 
Let us consider the pseudo-differential operators
\[
A_j=\OP\left(\frac{\cW^\mu}{\sqrt{Jf\circ f_j^{-1}}\cdot \cW^\mu\circ Df_j^{*}}\right)\quad \text{and}\quad 
B_j=\OP\left(\frac{\cW^\mu \circ Df_j^*}{\sqrt{Jf\circ f_j^{-1}}}\right) 
\]
and set 
\[ 
v_j:=u_j\circ f_j^{-1}\quad \text{and}\quad b_j=B_j v_j
\]
for $j=1,2,\dots, \deg f$. Then, by the product formula (\cite[\S  2.4]{TaylorText81}) of pseudo-differential operator, we have 
\begin{align*}
&\|\OP(\cW^\mu)Pu\|_{L^2}\le 
\left\|\sum_{j=1}^{\deg f} 
A_j B_j v_j \right\|_{L^2} + \sum_{j=1}^{\deg f}\|\OP(\cW^\mu) P_j u_j-A_jB_j v_j\|_{L^2}\\
&= 
\left\|\sum_{j=1}^{\deg f} 
A_j B_j v_j \right\|_{L^2} + 
\sum_{j=1}^{\deg f}
\bigg\|\bigg(\OP(\cW^\mu) \OP\bigg(\frac{1}{Jf\circ f_j^{-1}}\bigg) -A_jB_j\bigg) v_j\bigg\|_{L^2}\\
&\le 
\left\|\sum_{j=1}^{\deg f} 
A_j B_j v_j\right\|_{L^2} + C\sum_{j=1}^{\deg f}\|v_j\|_{H^{\mu-1}}
\end{align*}
where $C$ is a constant that may depend on $f$ but not on $u$.
(In the following we use $C$ as a generic symbol for constants of this kind.)  
Recall the Lagrange identity
\[
\sum_{i,j=1}^d |x_i|^2|y_j|^2- \left|\sum_{i=1}^d \bar{x}_i y_i\right|^2=\frac{1}{2} 
\sum_{i\neq j} |\bar{x}_i y_j-\bar{x}_j y_i|^2.
\]
Following its proof with regarding $A_i$ and $b_i=B_iv_i$ as $x_i$ and $y_i$ respectively\footnote{Further regard $A_i^*$ as $\bar{x}_i$ and the $L^2$ inner product as the product of complex numbers in a few places.} and  using the product and adjoint formula (\cite[\S  2.4]{TaylorText81}) to shuffle the pseudo-differential operators, we see that
\begin{align*}
\sum_{i,j=1}^{\deg f} \|A_j b_i\|_{L^2}^2 
&-
\left\|\sum_{j=1}^{\deg f} 
A_j b_j\right\|_{L^2}^2\\
&\ge \frac{1}{2}\sum_{i\neq j} \|A_j b_i-A_i b_j\|_{L^2}^2 -C \sum_{j=1}^{\deg f}\|v_j\|_{H^{\mu-(1/2)}}^2.
\end{align*}
Further, by the product and adjoint formula, we see
\[
\sum_{i,j=1}^{\deg f} \|A_j b_i\|_{L^2}^2
\le 
\sum_{i=1}^{\deg f}
\mathrm{Re} \langle b_i, A b_i\rangle
+C\sum_{j=1}^{\deg f}\|v_j\|_{H^{\mu-(1/2)}}^2
\]
where $A$ is the pseudo-differential operator with real symbol
\[
A=\OP\left(\sum_{j=1}^{\deg g} \frac{\cW^{2\mu}}{(Jf\circ f_j^{-1})\cdot \cW^{2\mu}\circ Df_j^*}\right)
\]
and we have used the estimate
\[
|\mathrm{Im} \langle b_i, A b_i\rangle| =
\left|\left\langle b_i, \frac{A-A^*}{2} b_i\right\rangle \right| \le C \|v_i\|^2_{H^{\mu-(1/2)}}.
\]
Let $\varepsilon>0$ be a given small real number. 
By the definition of $B^\mu(f)$, we have 
\[
\sum_{j=1}^{\deg g} \frac{\cW^{2\mu}}{(Jf\circ f_j^{-1})\cdot \cW^{2\mu}\circ Df_j^*}\le B^{2\mu}(f)+\varepsilon
\]
on the outside of a sufficiently large neighborhood of the zero section depending on $\varepsilon>0$. For a negative number $\kappa<\min\{0,\mu-1/2\}$, we can take a large constant $C>0$ so that 
\begin{equation}\label{B}
 B^{2\mu}(f)+\varepsilon + C\cdot \cW^{2\kappa}-\sum_{j=1}^{\deg g} \frac{\cW^{2\mu}}{(Jf\circ f_j^{-1})\cdot \cW^{2\mu}\circ Df_j^*} 
 \end{equation}
 is non-negative. 
 Hence, applying G\r{a}rding's inequality \cite[\S 2.8]{TaylorText81} to the pseudo-differential operator with symbol \eqref{B},  we obtain
\[
\mathrm{Re} \langle b_i, A b_i\rangle \le (B^{2\mu}(f)+\varepsilon) \|b_i\|_{L^2}^{2}+ C\|v_i\|^2_{H^{\mu-(1/2)}}.
\]
From the relation $v_j:=u_j\circ f_j^{-1}$ and  the coordinate change formula \cite[\S 2.5]{TaylorText81}, we see that 
\begin{align*}
\| b_j \|_{L^2}
&\le \left\|\OP(\cW^\mu \circ Df_j^*)\left(\frac{v_{j}}{\sqrt{Jf\circ f_j^{-1}}}\right)\right\|_{L^2}+C \|v_{j}\|_{H^{\mu-1}}\\
&\le \left\|\frac{(\OP(\cW^\mu ) u_j) \circ f_j^{-1}}{\sqrt{Jf\circ f_j^{-1}}}\right\|_{L^2}+C'\|u_{j}\|_{H^{\mu-1}}\\
&=\|\OP(\cW^\mu)u_{j}\|_{L^2}+C' \|u_{j}\|_{H^{\mu-1}}
\end{align*}
for some constants $C,C'>0$.
Finally note that we have 
\[
\|v_j\|_{H^{\mu'}}\le C\|u_j\|_{H^{\mu'}}
\] 
for any $\mu'\in \real$ and also that 
\[
\|u_j\|_{H^{\mu'}}\le C\|u_j\|_{H^{\mu''}}
\]
when $\mu'\le \mu''$ where $C$ may depend on $\mu'$ and $\mu''$. 
Therefore, summarizing the argument above, we have that, for any $\max\{0,\mu-(1/2)\}\le\mu'<\mu$ and $\varepsilon>0$, there exists $C>1$ such that
\[
\|Pu\|_{H^\mu}\le \sqrt{(B^{2\mu}(f)+\varepsilon)}\|u\|_{H^\mu}+C\|u\|_{H^{\mu'}}. 
\]

By using a partition of unity and the product formula (\cite[\S  2.4]{TaylorText81}), we see that the same inequality holds for arbitrary $u\in C^\infty(M)$. 
Since $H^\mu(M)$ for $\mu>0$ is compactly embedded in $H^{\mu'}(M)$ and since $\varepsilon>0$ is arbitrary,  the last inequality and Henion's theorem \cite{Hennion93} tell that the essential spectral radius of $P$ on $H^\mu(M)$ is bounded by $\sqrt{B^{2\mu}(f)}$. Finally noting that the argument above remains valid when $f$ is replaced by its iterates $f^n$, we obtain the conclusion of the theorem. 

\section{A conclusive remark}
It is possible to generalize our result without much effort. In the last section, we consider the weight functions $\mathcal{W}^\mu$ for $\mu>0$. But we could consider a more general class of positive function  in the place of $\mathcal{W}^\mu$. For instance, we may set 
\[
\mathcal{W}^{\boldsymbol{\mu}}(x,\xi)=\langle |\xi|\rangle^{\boldsymbol{\mu}([(x,\xi)])}
\]
for a $C^\infty$ positive-valued function $\boldsymbol{\mu}:\mathbb{P}T^*M\to \real$ on the projectivized cotangent bundle $\mathbb{P}T^*M$ and prove Theorem\footnote{In the case where $\boldsymbol{\mu}$ takes negative values, the argument in the proof of Corollary is not valid.} with setting 
\[
B^{\boldsymbol{\mu}}(f)= \sup_{(x,\xi)\in T^*M}
\sum_{(y,\eta)\in T^*M: f^{\dag}(y,\eta)=(x,\xi)}
\frac{\cW^{\boldsymbol{\mu}}(x,\xi)}{|Jf(y)|\cdot \cW^{\boldsymbol{\mu}}(y,\eta)}.
\]
We refer \cite{FRS,1706.09307} for the treatment of the pseudo-differential operators $\OP(\mathcal{W}^{\boldsymbol{\mu}})$. 
It will be an interesting subject to find a reasonable but non-trivial class of dynamical systems for which $\lim_{n\to \infty}B^{2\boldsymbol{\mu}}(f^n)<1$. 

\appendix
\section{Virtually expanding maps that are not expanding}
Let us recall the map $f:\mathbb{T}^2\to \mathbb{T}^2$ in Example, defined by \eqref{example}. Let $\mu>0$ be arbitrary. Below we  prove  $B^{\mu}(f)<1$ assuming that $m$ is sufficiently large. 

Take an arbitrary point $(q,\xi)\in T^{*}\mathbb{T}^2$ with $\xi\neq 0$ and write $\xi=(\xi_x,\xi_y)$. We will show $b^{\mu}(q,\xi)<c<1$ for some constant $c$ independent of $(q,\xi)$.  If $\xi_y=0$, we see $
b^\mu(q,\xi)=m^{-\mu}<1$. Hence we may assume $\xi_y \neq 0$. Further, since $b^\mu(q,\xi)=b^{\mu}(q,\beta \xi)$ for any $\beta\neq 0$, we may and do assume $\xi_y=1$ and write $\xi=(\xi_x,1)$. 

Consider $(p,\eta)\in (f^\dag)^{-1}(q,\xi)$ and write $p=(x,y)$ and $\eta=(\eta_x,\eta_y)$. Then we have $\eta_y=1$ and 
\begin{align*}
\begin{bmatrix} \xi_x\\ 1\end{bmatrix}=(D_{p}f^{-1})^{*}\begin{bmatrix} \eta_x\\ 1\end{bmatrix}
&=\begin{bmatrix}1/m& -2\pi  \cdot \sin 2\pi  x\\ 0 & 1
\end{bmatrix}\begin{bmatrix} \eta_x\\ 1\end{bmatrix}=\begin{bmatrix}\eta_x/m  -2\pi  \cdot \sin 2\pi  x\\  1
\end{bmatrix}
\end{align*}
This implies that, if 
\begin{equation}\label{eq:cone}
|\xi_x + 2\pi \sin 2\pi x|\ge 5\pi /m,
\end{equation}
 we have $|\eta_x|\ge 5\pi$ and 
 \[
 \frac{|\xi|}{|\eta|}\le \frac{\sqrt{1+(\eta_x/m+2\pi)^2}}{\sqrt{1+\eta_x^2}}\le 
 \frac{\sqrt{1+(2\pi)^2}}{\sqrt{1+(4\pi)^2}}
 \]
provided that $m$ is sufficiently large. (Notice  that the last inequality holds obviously if $|\eta_x|$ is sufficiently large.) 

From the observation above, the sum on the right hand side of \eqref{eq:b} restricted to those elements $(p,\eta)\in (f^{\dag})^{-1}(q,\xi)$ satisfying the condition \eqref{eq:cone} is bounded by $\sqrt{1+(2\pi)^2}/\sqrt{1+(4\pi)^2}$. 
The cardinality of the remaining elements of $(f^{\dag})^{-1}(q,\xi)$ is bounded by $C\sqrt{m}$ with $C$ a constant independent of $m$. Hence the corresponding partial sum is bounded by 
\[
C\sqrt{m} \cdot \frac{\max_{w\in M} \|(D_wf^*)^{-1}\|^{\mu}}{m}\le \frac{C_\mu}{\sqrt{m}}
\]
where $C_\mu>1$ is a constant depending on $\mu$. 
Therefore, letting $m$ be sufficiently large, we obtain 
\[
b^{\mu}(q,\xi)< 
\left|\frac{1+(2\pi)^2}{1+(4\pi)^2}\right|^{\mu/2}+ \frac{C_\mu}{\sqrt{m}}<\left|\frac{1+(2\pi)^2}{1+(3\pi)^2}\right|^{\mu/2}<1
\]
uniformly for $(q,\xi)$.
Thus we have obtained $B^{\mu}(f)<1$. 


\bibliography{mybib}

\def\cprime{$'$} \def\cprime{$'$}
\providecommand{\bysame}{\leavevmode\hbox to3em{\hrulefill}\thinspace}
\providecommand{\MR}{\relax\ifhmode\unskip\space\fi MR }
\providecommand{\MRhref}[2]{%
  \href{http://www.ams.org/mathscinet-getitem?mr=#1}{#2}
}
\providecommand{\href}[2]{#2}
\begin{thebibliography}{10}

\bibitem{MR2191383}
Artur Avila, S{\'e}bastien Gou{\"e}zel, and Masato Tsujii, \emph{Smoothness of
  solenoidal attractors}, Discrete Contin. Dyn. Syst. \textbf{15} (2006),
  no.~1, 21--35. \MR{MR2191383}

\bibitem{MR1793194}
Viviane Baladi, \emph{Positive transfer operators and decay of correlations},
  Advanced Series in Nonlinear Dynamics, vol.~16, World Scientific Publishing
  Co., Inc., River Edge, NJ, 2000. \MR{1793194}

\bibitem{BaladiTsujii07}
Viviane Baladi and Masato Tsujii, \emph{Anisotropic {H}\"older and {S}obolev
  spaces for hyperbolic diffeomorphisms}, Ann. Inst. Fourier (Grenoble)
  \textbf{57} (2007), no.~1, 127--154. \MR{MR2313087}

\bibitem{FRS}
Fr{\'e}d{\'e}ric Faure, Nicolas Roy, and Johannes Sj{\"o}strand,
  \emph{Semi-classical approach for {A}nosov diffeomorphisms and {R}uelle
  resonances}, Open Math. J. \textbf{1} (2008), 35--81. \MR{MR2461513}

\bibitem{1706.09307}
Fr{\'e}d{\'e}ric Faure and Masato Tsujii, \emph{Fractal weyl law for the ruelle
  spectrum of anosov flows}, 2017.

\bibitem{GouezelLiverani06}
S{\'e}bastien Gou{\"e}zel and Carlangelo Liverani, \emph{Banach spaces adapted
  to {A}nosov systems}, Ergodic Theory Dynam. Systems \textbf{26} (2006),
  no.~1, 189--217. \MR{MR2201945 (2007h:37037)}

\bibitem{Hennion93}
Hubert Hennion, \emph{Sur un th\'{e}or\`eme spectral et son application aux
  noyaux lipchitziens}, Proc. Amer. Math. Soc. \textbf{118} (1993), no.~2,
  627--634. \MR{1129880}

\bibitem{MR3521633}
Yushi Nakano, Masato Tsujii, and Jens Wittsten, \emph{The partial captivity
  condition for {$\rm U(1)$} extensions of expanding maps on the circle},
  Nonlinearity \textbf{29} (2016), no.~7, 1917--1925. \MR{3521633}

\bibitem{Ruelle76}
David Ruelle, \emph{Zeta-functions for expanding maps and {A}nosov flows},
  Invent. Math. \textbf{34} (1976), no.~3, 231--242. \MR{0420720 (54 \#8732)}

\bibitem{TaylorText81}
M.E. Taylor, \emph{Pseudodifferential operators}, Princeton mathematical
  series, vol.~34, Princeton University Press, 1981.

\bibitem{ZhangZ}
Zhiyuan Zhang, \emph{On the smooth dependence of {SRB} measures for partially
  hyperbolic systems}, Comm. Math. Phys. \textbf{358} (2018), no.~1, 45--79.
  \MR{3772032}

\end{thebibliography}
\bibliographystyle{amsplain} 


\end{document}